\newtheorem{teo}{Theorem}[section]
\newtheorem{pro}{Proposition}[section]
\newtheorem{cor}{Corollary}[section]
\theoremstyle{definition}
\title[The classification of $\ast$-algebras of  $LOG$-integrable functions]{Isomorphic classification of $\ast$-algebras of  log-integrable functions}
\keywords{Passport of Boolean algebra, isomorphism of Boolean algebra, log-integrable functions}
\subjclass[2010]{28A60, 46E30 }
\begin{document}
\date{June 17, 2017}
\begin{abstract}
Using the notion of passport of a normed Boolean algebra, necessary and sufficient conditions for 
a $\ast$-isomorphism of $\ast$-algebras of log-integrable measurable functions are found.
\end{abstract}

\author{Rustam Abdullayev }
\address{Uzbek State University of World Languages\\ Tashkent,  700174, Uzbekistan}
\email{arustambay@yandex.com}

\author{Vladimir Chilin}
\address{National University of Uzbekistan\\ Tashkent,  700174, Uzbekistan}
\email{vladimirchil@gmail.com}

\maketitle

\section{Introduction}

Let \ $(\Omega, \mathcal A, \mu)$ be a $\sigma$-finite measure a measure space, and let
$\mathcal L_{\log}(\Omega, \mathcal A, \mu)$ be the symmetric function space
consisting of complex valued measurable functions $f$ on  $(\Omega, \mathcal A, \mu)$ 
such that $\int_{\Omega}\log(1+|f|)\,d\nu<\infty$.
It is know that $\mathcal L_{\log}(\Omega, \mathcal A, \mu)$ is a $\ast$-subalgebra in the $\ast$-algebra $\mathcal L_0(\Omega, \mathcal A, \mu)$ of complex valued measurable functions on  $(\Omega, \mathcal A, \mu)$ 
(functions equal $\mu$-almost everywhere are identified)  \cite{dsz}. In addition, the space 
$\mathcal L_{\log}(\Omega, \mathcal A, \mu)$  is a non-locally-convex Hausdorff topological 
$\ast$-algebra with respect to the $F$-norm  $$\|f\|_{\log}=\int_{\Omega} log(1+|f|) d \mu.$$

In the case where the measure space $(\Omega, \mathcal A, \mu)$ is the unit circle in the complex plane endowed with Lebesgue measure, the boundary values of Nevanlinna functions belong to the symmetric function space $\mathcal L_{\log}(\Omega, \mathcal A, \mu)$,  and the map assigning a Nevanlinna
function its boundary values yields
an injective and continuous algebraic homomorphism from the Nevanlinna class to $\mathcal L_{\log}(\Omega, \mathcal A, \mu)$. Since the Nevanlinna class is not well behaved under the usual metric, 
it is natural to study its topological properties with respect to the $F$-norm  $\|\cdot\|_{\log}$ (see \cite{dsz}).

Taking into account various applications of $\ast$-algebras  $\mathcal L_{\log}(\Omega, \mathcal A, \mu)$ in the theory of functions of a complex variable, it is important to describe these algebras associated with different measures up to $\ast$-isomorphisms. This paper is devoted to solving this problem.
Utilizing the notion of passport of a normed Boolean algebra, we give necessary and sufficient conditions for a $\ast$-isomorphism of $\ast$-algebras of log-integrable measurable functions. The proof uses the method of papers \cite {a}, \cite {ach}, which give a description of the $\ast$-isomorphisms of Arens algebras of measurable functions.

\section{Preliminaries}

Let $(\Omega, \mathcal A, \mu)$ be $\sigma$-finite measure space, and let $\mathcal  L_0(\Omega, \mathcal A, \mu)$ 
be the $\ast$-algebra of complex valued measurable functions on $(\Omega, \mathcal A, \mu)$ (functions equal $\mu$-almost everywhere are identified). Following \cite{dsz}, we consider the $\ast$-subalgebra
 $$
 \mathcal L_{\log}(\Omega, \mathcal A, \mu)=\{f \in  \mathcal L_0(\Omega, \mathcal A, \mu): \int \limits_{\Omega} \log(1+|f|)d\mu< + \infty\} $$
of the $\ast$-algebra $\mathcal  L_0(\Omega, \mathcal A, \mu)$.
Since for any $a, b >0, \ a \neq 1, \ b \neq 1$, there are constants $0< d_1 < d_2$ such that $d_1 \log_a c \leq \log_b c \leq d_2 \log_a c$ for all $c>0$, it follows that  the definition of $\mathcal L_{\log}(\Omega, \mathcal A, \mu)$  does not depend on the choice of base of the logarithm.

For each $f \in \mathcal L_{\log}(\Omega, \mathcal A, \mu)$ we put
$$
\|f\|_{\log} = \int_{\Omega} \log (1+ | f |) d \mu.
$$
According to \cite [Lemma 2.1] {dsz}, the function 
$$
\|\cdot\|_{\log}: \mathcal L_{\log}(\Omega, \mathcal A, \mu) \rightarrow [0,\infty)
$$ 
is an $F$-norm, that is, given $f,g\in \mathcal L_{\log}(\Omega, \mathcal A, \mu)$,
\begin{enumerate}[(i)]
\item $\|f\|_{\log}>0$ if $f \neq 0$;

\item $\|\alpha f\|_{\log}\le\|f\|_{\log}$ if $|\alpha|\le 1$;

\item $\lim_{\alpha\to 0}\|\alpha f\|_{\log}=0$;

\item $\|f+g\|_{\log}\le\|f\|_{\log}+\|g\|_{\log}$.
\end{enumerate}
Besides, $\mathcal L_{\log}(\Omega, \mathcal A, \mu))$ is a complete topological $\ast$-algebra with respect to 
the topology generated by the metric $\rho(f,g)=\|f-g\|_{\log}$; see \cite[Corollary 2.7]{dsz}.

Let $\mu$ and $\nu$ be $\sigma$-finite measures on a measure space $(\Omega,
\mathcal A)$ such that $\mu \sim \nu$, that is,
$$
\mu(A) =0 \ \Leftrightarrow\ \nu(A) =0, \ A \in  \mathcal A.
$$
In this case,
$$
\mathcal L_0(\Omega, \mathcal A, \mu) = \mathcal L_0(\Omega, \mathcal A, \nu):=\mathcal L_0(\Omega), \ \ \mathcal L_{\infty}(\Omega, \mathcal A, \mu) = \mathcal L_{\infty}(\Omega, \mathcal A,  \nu) :=\mathcal L_{\infty}(\Omega).
$$
We denote by $\frac{d\nu}{d\mu}$ the
Radon-Nikodym derivative of the measure $ \nu $ with respect to the measure $\mu$. It is known that $\frac{d\nu}{d\mu} \in \mathcal  L_0(\Omega, \mathcal A, \mu)$ and
$$
f \in \mathcal  L_1(\Omega, \mathcal A, \nu) \ \Leftrightarrow \ f\cdot \frac{d\nu}{d\mu} \in \mathcal  L_1(\Omega, \mathcal A, \mu);
$$
in addition,
$$\int_{\Omega}\ f d\nu =\int_{\Omega}(f \cdot \frac{d\nu}{d\mu}) d\mu.
$$
Since $\mu \sim \nu$, it follows that there exists an inverse function $(\frac{d\nu}{d\mu})^{-1} = \frac{d\mu}{d\nu}$.

\begin{pro}\label{p1}
The following conditions are equivalent:
\begin{enumerate}[(i)]
\item $\frac{d\nu}{d\mu} \in \mathcal L_{\infty}(\Omega)$;
\item $\mathcal L_{\log}(\Omega, \mathcal A, \mu) \subset \mathcal L_{\log}(\Omega, \mathcal A, \nu)$.
\end{enumerate}
\end{pro}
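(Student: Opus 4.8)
The plan is to treat the two implications separately, writing throughout $g=\frac{d\nu}{d\mu}$, which is strictly positive $\mu$-almost everywhere because $\mu\sim\nu$. The one fact I would isolate first is the change-of-measure identity in its unrestricted form: for every measurable $h\ge 0$ one has
$$
\int_{\Omega} h\,d\nu=\int_{\Omega} h\,g\,d\mu ,
$$
both sides being understood as elements of $[0,+\infty]$. This follows from the Radon--Nikodym relation recalled just before the proposition, extended to all nonnegative measurable $h$ by monotone convergence, and in particular it does not presuppose $h\in\mathcal L_1(\nu)$. Applying it to $h=\log(1+|f|)$ converts the membership $f\in\mathcal L_{\log}(\Omega,\mathcal A,\nu)$ into the finiteness of $\int_\Omega \log(1+|f|)\,g\,d\mu$, so the whole statement reduces to the question of whether multiplication by $g$ keeps these integrals finite.

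The implication (i)$\Rightarrow$(ii) is then immediate. If $g\le C$ $\mu$-almost everywhere for some constant $C$, then for any $f\in\mathcal L_{\log}(\Omega,\mathcal A,\mu)$ I estimate
$$
\int_\Omega\log(1+|f|)\,d\nu=\int_\Omega\log(1+|f|)\,g\,d\mu\le C\int_\Omega\log(1+|f|)\,d\mu=C\,\|f\|_{\log}<\infty,
$$
so $f\in\mathcal L_{\log}(\Omega,\mathcal A,\nu)$.

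For (ii)$\Rightarrow$(i) I would argue by contraposition and construct a counterexample. Assuming $g\notin\mathcal L_\infty(\Omega)$, the dyadic shells $S_k=\{x:2^k\le g(x)<2^{k+1}\}$ cannot be $\mu$-null for all but finitely many $k$ (else $g$ would be essentially bounded), so there is an increasing sequence $k_1<k_2<\cdots$ with $\mu(S_{k_j})>0$; intersecting each $S_{k_j}$ with a set of finite measure, available since $\mu$ is $\sigma$-finite, I obtain pairwise disjoint $A_j$ with $0<\mu(A_j)<\infty$ and $g\ge 2^{k_j}$ on $A_j$. I then build the nonnegative step function $\varphi=\sum_j c_j\,\mathbf 1_{A_j}$ with coefficients normalized by $c_j\,\mu(A_j)=j^{-2}$, so that
$$
\int_\Omega\varphi\,d\mu=\sum_j c_j\,\mu(A_j)=\sum_j j^{-2}<\infty,\qquad \int_\Omega\varphi\,g\,d\mu\ge\sum_j 2^{k_j}c_j\,\mu(A_j)=\sum_j 2^{k_j}j^{-2}=\infty,
$$
the last divergence holding because $k_j\ge j$. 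Finally I set $f=e^{\varphi}-1$, which is measurable and nonnegative with $\log(1+|f|)=\varphi$; then $\|f\|_{\log}=\sum_j j^{-2}<\infty$ places $f$ in $\mathcal L_{\log}(\Omega,\mathcal A,\mu)$, while $\int_\Omega\log(1+|f|)\,d\nu=\int_\Omega\varphi\,g\,d\mu=\infty$ keeps $f$ out of $\mathcal L_{\log}(\Omega,\mathcal A,\nu)$, contradicting (ii).

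I expect the delicate point to be this last construction, where $\sigma$-finiteness (to secure finite-measure pieces $A_j$ on which $g$ is large), disjointness, and the competing behaviour of the two series must be arranged at once. The two requirements $\int\varphi\,d\mu<\infty$ and $\int\varphi\,g\,d\mu=\infty$ pull the coefficients $c_j$ in opposite directions, but since each $\mu(A_j)$ is finite and positive I can absorb it into $c_j$ and reduce to a single scalar series; the freedom to make $\sum_j 2^{k_j}j^{-2}$ diverge while $\sum_j j^{-2}$ converges is exactly what the unboundedness of $g$ (hence $2^{k_j}\to\infty$) supplies. Everything else, namely the change-of-measure identity and the easy implication, is routine.
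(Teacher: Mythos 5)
Your proof is correct and takes essentially the same route as the paper: the identical one-line estimate for (i) $\Rightarrow$ (ii), and for (ii) $\Rightarrow$ (i) the same construction $f=e^{\varphi}-1$ with $\varphi$ a step function carrying weight $j^{-2}$ on disjoint sets where the density $\frac{d\nu}{d\mu}$ is large, so that one integral is $\sum_j j^{-2}<\infty$ while the other dominates a divergent series. Your version is in fact slightly more careful: by intersecting the dyadic shells with finite-measure pieces (using $\sigma$-finiteness) you guarantee $0<\mu(A_j)<\infty$, a point the paper's proof silently assumes for its level sets $A_{n_k}$.
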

\begin{proof}  $(i)\ \Rightarrow\ (ii)$: If $\frac{d\nu}{d\mu} \in \mathcal L_{\infty}(\Omega)$ and 
$f \in \mathcal L_{\log}(\Omega, \mathcal A, \mu)$, then
$$\int_{\Omega}\log(1+|f|)d\nu=\int_{\Omega}\log(1+|f|) \frac{d\nu}{d\mu}d\mu \atop \leq\|\frac{d\nu}{d\mu}\|_{\infty}\int_{\Omega} \log(1+|f|)d\mu< \infty.$$
Consequently, $f \in \mathcal L_{\log}(\Omega, \mathcal A, \nu)$, hence  $\mathcal L_{\log}(\Omega, \mathcal A, \mu) \subset \mathcal L_{\log}(\Omega, \mathcal A, \nu)$.

$(ii)\ \Rightarrow\ (i) $: Let $\mathcal L_{\log}(\Omega, \mathcal A, \mu) \subset \mathcal L_{\log}(\Omega, \mathcal A, \nu)$ and suppose that $\frac{d\nu}{d\mu} \notin  \mathcal L_{\infty}(\Omega)$.
In this case there exists a sequence of positive integers $ n_k \uparrow \infty $ such that
$\mu(A_{n_k}) >0$, where $A_{n_k}=\{\omega \in \Omega: n_k \leq \frac{d\nu}{d\mu}(\omega)< n_k+1\}$,
for all natural $k$.
Consider a step measure function  $g=\sum_{k=1}^{\infty}\frac{1}{k^{2}\cdot\mu(A_{n_{k}})}\cdot \chi_{A_{n_{k}}}$, where $\chi_A$  is the characteristic function of a set  $A \in \mathcal A$, that is, $\chi_A(\omega) = 1$ for  $\omega \in A$ and $\chi_A(\omega)=0$, if \ $\omega \notin A$.
Putting $f=e^{g}-1$, we obtain that
\begin{equation}\label{e10}
\int_{\Omega}\ln(1+|f|)d\mu=\sum_{k=1}^{\infty}\frac{\mu(E_{n_{k}})}{k^{2}\mu(E_{n_{k}})}=
\sum_{k=1}^{\infty}\frac{1}{k^{2}}<\infty,
\end{equation}
At the same time,
\begin{equation}\label{e20}
\int_{\Omega}\log(1+|f|)d\nu=\int_{\Omega}\log(1+|f|)\frac{d\nu}{d\mu}d\mu \geq
\sum_{k=1}^{\infty}\frac{n_{k}\cdot
\mu(E_{n_{k}})}{k^{2}\mu(E_{n_{k}})}=$$
$$\sum_{k=1}^{\infty}\frac{n_{k}}{k^{2}}\geq\sum_{k=1}^{\infty}\frac{1}{k}=\infty .
\end{equation}
It follows from (\ref{e10}) and (\ref{e20})  that $f\in \mathcal L_{\log}(\Omega, \mathcal A, \mu)$ and $f\notin \mathcal L_{\log}(\Omega, \mathcal A, \mu)$, that is, $\mathcal L_{\log}(\Omega, \mu)$ is not a subset of $\mathcal L_{\log}(\Omega, \nu)$ which contradicts the assumption. Consequently, $\frac{d\nu}{d\mu} \in \mathcal L_{\infty}(\Omega)$.
\end{proof}
Directly from Proposition \ref{p1} we obtain the following.
\begin{cor}\label{c1}
The following conditions are equivalent:
\begin{enumerate}[(i)]
\item $\frac{d\nu}{d\mu} \in \mathcal L_{\infty}(\Omega)  \  \ \text{and} \ \  \frac{d\mu}{d\nu} \in \mathcal L_{\infty}(\Omega)$;
\item $\mathcal L_{\log}(\Omega, \mathcal A, \mu) = \mathcal L_{\log}(\Omega, \mathcal A, \nu)$.
\end{enumerate}
\end{cor}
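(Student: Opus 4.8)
The plan is to reduce the set equality in condition (ii) to the two inclusions $\mathcal L_{\log}(\Omega, \mathcal A, \mu) \subset \mathcal L_{\log}(\Omega, \mathcal A, \nu)$ and $\mathcal L_{\log}(\Omega, \mathcal A, \nu) \subset \mathcal L_{\log}(\Omega, \mathcal A, \mu)$, and then to apply Proposition \ref{p1} to each inclusion separately. First I would note the trivial set-theoretic fact that the two algebras coincide if and only if both inclusions hold simultaneously. By Proposition \ref{p1}, the inclusion $\mathcal L_{\log}(\Omega, \mathcal A, \mu) \subset \mathcal L_{\log}(\Omega, \mathcal A, \nu)$ is equivalent to $\frac{d\nu}{d\mu} \in \mathcal L_{\infty}(\Omega)$, which gives the first half of condition (i) at once.

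For the reverse inclusion, the key step is to invoke Proposition \ref{p1} with the roles of the measures $\mu$ and $\nu$ interchanged. Since $\mu \sim \nu$ is a symmetric relation and $\sigma$-finiteness is preserved under the swap, the proposition applies verbatim to the pair $(\nu, \mu)$, yielding that $\mathcal L_{\log}(\Omega, \mathcal A, \nu) \subset \mathcal L_{\log}(\Omega, \mathcal A, \mu)$ is equivalent to the boundedness of the Radon--Nikodym derivative of $\mu$ with respect to $\nu$. Here I would use the identity recorded just before Proposition \ref{p1}, namely that this derivative equals $(\frac{d\nu}{d\mu})^{-1} = \frac{d\mu}{d\nu}$, so that the quantity produced by the swap is exactly the one named in the second half of condition (i).

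Combining the two equivalences, condition (ii) holds precisely when $\frac{d\nu}{d\mu} \in \mathcal L_{\infty}(\Omega)$ and $\frac{d\mu}{d\nu} \in \mathcal L_{\infty}(\Omega)$, which is condition (i). I do not expect a genuine obstacle: the statement is a formal consequence of Proposition \ref{p1} together with the symmetry of the relation $\mu \sim \nu$. The only point deserving a line of care is to confirm that the hypotheses underlying Proposition \ref{p1} are symmetric in $\mu$ and $\nu$, so that applying it to the swapped pair is legitimate; once this is observed, the proof is immediate.
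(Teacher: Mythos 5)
Your proposal is correct and is exactly the argument the paper intends: the paper states that Corollary~\ref{c1} follows ``directly from Proposition~\ref{p1},'' i.e.\ by splitting the equality into two inclusions and applying the proposition once as stated and once with $\mu$ and $\nu$ interchanged, using the symmetry of $\mu \sim \nu$ and the identity $\frac{d\mu}{d\nu} = (\frac{d\nu}{d\mu})^{-1}$. Nothing further is needed.
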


Let $\nabla_\mu$  be the complete Boolean algebra of equivalence classes $e=[A]$ of $\mu$-almost everywhere
equal sets. It is known that $\widehat{\mu}(e) = \mu(A)$  is a strictly positive countably additive 
$\sigma$-finite measure on $\nabla_ \mu$.
Since $\mu \sim \nu $, it follows that  $\nabla_ \mu=\nabla_ \nu :=\nabla$. In what follows, the measure $\widehat {\mu}$ 
will be denoted by $\mu$, and $\mathcal  L_0(\Omega)$ ($\mathcal L_\infty(\Omega)$) by
$\mathcal  L_0(\nabla)$  (respectively, $\mathcal L_\infty(\nabla)$).

Let $\varphi: \nabla \rightarrow \nabla$ be an arbitrary automorphism of the Boolean algebra $ \nabla $. It is clear that $\lambda(e) =\mu(\varphi(e))$, $e \in \nabla$, is a strictly positive countably additive $\sigma $-finite measure on the Boolean algebra $\nabla$. Denote by $\Phi$ a $\ast$-isomorphism of the $\ast$-algebra $\mathcal L_0(\nabla)$ such that $\varphi (e)=\Phi(e)$ for all $e \in \nabla$. The restriction of $\Phi$ on the $C^*$-algebra $\mathcal L_\infty(\nabla)$ is a $\ast$-isomorphism of  $\mathcal L_\infty(\nabla)$.

Since
$$
\int_{\Omega}\sum_{i=1}^{n} c_i e_i \ d\lambda = \sum_{k=i}^{n} c_i\lambda (e_i)= \sum_{k=i}^{n} c_i \mu(\varphi(e_i))=\int_{\Omega}\Phi(\sum_{i=1}^{n}c_ie_i) \ d\mu
$$
for all $e_i \in \nabla, \ \mu(e_i) < \infty, \ e_ie_j =0, \ i\neq j, \ i,j=1,..., n$, it follows that
$$
\int_{\Omega}f d\lambda = \int_{\Omega}\Phi(f) d\mu, \ \ \int_{\Omega}\Phi^{-1}(g) d\lambda =\int_{\Omega}g d\mu
$$
for all  $f \in \mathcal L_{1}(\nabla, \lambda) and g \in \mathcal L_{1}(\nabla, \nu)$. This means that $\Phi(\mathcal L_{1}(\nabla, \lambda))=\mathcal L_{1}(\nabla, \mu)$.

Let $\mu$ ($\nu$)  be a strong  positive countably additive  $\sigma$-finite measure  on a complete Boolean algebra  
$\nabla_1$ (respectively, $\nabla_2$), let $\varphi: \nabla_1 \rightarrow \nabla_2$ be an isomorphism, and let 
$\Phi: \mathcal  L_0(\nabla_1) \to \mathcal  L_0(\nabla_2)$  be a $\ast$-isomorphism  such that 
$\varphi(e) = \Phi(e)$ for all $e \in \nabla_1$. It is clear that $\lambda(\varphi(e)) =\mu(e), \ e \in \nabla$, 
is a strong  positive countably additive  $\sigma$-finite measure on the Boolean algebra  $\nabla_2$.

\begin{pro}\label{p2}
$\Phi(log(1+|f|)) = log(1+\Phi(|f|))$ for all $f \in \mathcal L_0(\nabla_1)$ and
$\Phi((\mathcal L_{\log}(\nabla_1, \mu)) = \mathcal L_{\log}(\nabla_2, \lambda)$.
\end{pro}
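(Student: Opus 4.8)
The plan is to prove the two assertions in turn, using the first (a functional-calculus identity) as the tool for the second. Since $|f|\ge 0$ ranges over all positive elements of $\mathcal L_0(\nabla_1)$ as $f$ ranges over $\mathcal L_0(\nabla_1)$, the first assertion is equivalent to the claim that $\Phi(\log(1+g)) = \log(1+\Phi(g))$ for every positive $g \in \mathcal L_0(\nabla_1)$, and this is what I would establish.

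First I would verify the identity for step functions and then pass to a monotone limit. If $s = \sum_i c_i e_i$ with $c_i \geq 0$ and mutually orthogonal idempotents $e_i \in \nabla_1$, then $\log(1+s) = \sum_i \log(1+c_i)\,e_i$; applying $\Phi$, using $\Phi(e_i) = \varphi(e_i)$ and the orthogonality of the images, gives $\Phi(\log(1+s)) = \sum_i \log(1+c_i)\,\varphi(e_i) = \log(1+\Phi(s))$. Every positive $g \in \mathcal L_0(\nabla_1)$ is the supremum of an increasing sequence of such step functions $s_n \uparrow g$ obtained from its spectral truncations. Because $\Phi$ is a $\ast$-isomorphism extending the isomorphism $\varphi$ of the complete Boolean algebras $\nabla_1,\nabla_2$, it is order preserving and order continuous on increasing sequences, so $\Phi(s_n) \uparrow \Phi(g)$; since $t \mapsto \log(1+t)$ is continuous and increasing, $\log(1+s_n) \uparrow \log(1+g)$ and $\log(1+\Phi(s_n)) \uparrow \log(1+\Phi(g))$. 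Passing to the limit in the step-function identity yields $\Phi(\log(1+g)) = \log(1+\Phi(g))$, proving the first assertion.

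For the second assertion I would first note that $\Phi$ commutes with the absolute value: from $|\Phi(f)|^2 = \Phi(f)^\ast\Phi(f) = \Phi(f^\ast f) = (\Phi(|f|))^2$ and the positivity of both $|\Phi(f)|$ and $\Phi(|f|)$ we get $|\Phi(f)| = \Phi(|f|)$. Next, extending the computation in the preliminaries from step functions to arbitrary positive measurable functions by monotone convergence and using $\lambda(\varphi(e)) = \mu(e)$ gives the transformation rule $\int_{\nabla_2}\Phi(h)\,d\lambda = \int_{\nabla_1} h\,d\mu$ for every $h \geq 0$ in $\mathcal L_0(\nabla_1)$. Then for $f \in \mathcal L_{\log}(\nabla_1,\mu)$, taking $h = \log(1+|f|)$ and combining with the first assertion,
\[
\int_{\nabla_2}\log(1+|\Phi(f)|)\,d\lambda = \int_{\nabla_2}\Phi(\log(1+|f|))\,d\lambda = \int_{\nabla_1}\log(1+|f|)\,d\mu = \|f\|_{\log} < \infty,
\]
so $\Phi(f) \in \mathcal L_{\log}(\nabla_2,\lambda)$ and hence $\Phi(\mathcal L_{\log}(\nabla_1,\mu)) \subseteq \mathcal L_{\log}(\nabla_2,\lambda)$. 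Applying the same argument to $\Phi^{-1}$, which extends $\varphi^{-1}$ and satisfies $\mu(\varphi^{-1}(e')) = \lambda(e')$, gives the reverse inclusion, and therefore equality.

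I expect the crux to be the limit step in the first assertion: one must know that $\Phi$ sends the increasing sequence $s_n \uparrow g$ to $\Phi(s_n)\uparrow\Phi(g)$ for possibly unbounded $g$, i.e. that $\Phi$ is order continuous rather than merely order preserving. This is exactly where the hypothesis that $\Phi$ extends a Boolean isomorphism of complete Boolean algebras does the work, since it forces $\Phi$ to respect the spectral resolutions out of which unbounded measurable functions are built; once this is in hand, the remaining steps are the orthogonality bookkeeping for step functions and the routine monotone-convergence extension of the integral rule.
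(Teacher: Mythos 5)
Your proof is correct, and its overall architecture is the same as the paper's: prove the identity $\Phi(\log(1+|f|))=\log(1+\Phi(|f|))$ on a tractable class of elements, extend to all of $\mathcal L_0(\nabla_1)$ by an increasing limit, then obtain $\Phi(\mathcal L_{\log}(\nabla_1,\mu))\subseteq \mathcal L_{\log}(\nabla_2,\lambda)$ from the $\mathcal L_1$-transformation rule, with the reverse inclusion coming from the same argument applied to $\Phi^{-1}$. The one genuine difference is the base case. The paper restricts $\Phi$ to the $C^*$-algebra $\mathcal L_\infty(\nabla_1)$ and invokes the theorem that a $\ast$-isomorphism of $C^*$-algebras commutes with continuous functional calculus, applied to $u(t)=\log(1+t)$, and only afterwards truncates $|f|$ to bounded pieces; you instead verify the identity by direct computation on finite spectral step functions $s=\sum_i c_i e_i$, where both sides equal $\sum_i \log(1+c_i)\,\varphi(e_i)$, and fold the passage from step functions to arbitrary positive elements into the same monotone limit. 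Your route is more elementary, replacing the cited functional-calculus lemma by orthogonality bookkeeping, at the cost of one extra approximation layer; the paper's is shorter given the $C^*$ machinery. You also make explicit two points the paper uses tacitly: that $|\Phi(f)|=\Phi(|f|)$ (via uniqueness of positive square roots), and that $\int \Phi(h)\,d\lambda=\int h\,d\mu$ for every positive $h$ (by monotone convergence from the step-function computation in the preliminaries, where the paper only records the consequence $\Phi(\mathcal L_1(\nabla_1,\mu))=\mathcal L_1(\nabla_2,\lambda)$). Both proofs hinge on exactly the crux you identified: that $\Phi$ carries increasing limits to increasing limits, which holds because $\Phi$ and $\Phi^{-1}$ are both positivity-preserving, so $\Phi$ is an order isomorphism of $\mathcal L_0(\nabla_1)$ onto $\mathcal L_0(\nabla_2)$ and hence preserves suprema of increasing sequences.
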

\begin{proof}
The restriction $\Psi$ of the $\ast$-isomorphism $\Phi$ on the $C^*$-algebra $\mathcal L_\infty(\nabla_1)$ is a $\ast$-isomorphism from $C^*$-algebra $\mathcal L_\infty(\nabla_1)$ onto $C^*$-algebra $\mathcal L_\infty(\nabla_2)$. Then 
we have 
$$
\Psi(u\circ|g|) = u\circ\Psi(|g|)
$$
for any $g \in \mathcal L_\infty(\nabla_1)$ and every continuous function $u:[0,+\infty) \rightarrow \mathbb R$.
The function $u(t)=log(1+t)$ is continuous on the interval $[0, + \infty)$. Therefore
$$
\Phi(log(1+|g|)) = \Psi(log(1+|g|))= log(1+\Psi(|g|) =log(1+\Phi(|g|))
$$
for all  $g \in \mathcal L_\infty(\nabla_1)$. If $f \in \mathcal L_0(\nabla_1)$,  then setting 
$g_n = |f|\cdot\chi_{\{|f|\leq n\}}$, we obtain $g_n \in \mathcal L_\infty(\nabla_1), \ n \in \mathbb N$, 
$0 \leq g_n \uparrow |f|$ and $log(1+g_n) \uparrow log(1+|f|)$. Since  the isomprphism $\Phi: \mathcal L_0(\nabla_1) \rightarrow \mathcal L_0(\nabla_2)$ is order preserving, we have
$$
\Phi(g_n) \uparrow \Phi(|f|), \ \ log(1+\Phi(g_n)) \uparrow log(1+\Phi(|f|)),
$$
and
$$log(1+\Phi(g_n)) = \Phi(log(1+g_n))\uparrow \Phi(log(1+|f|)).$$
hence
$$
\Phi(log(1+|f|)) = log(1+\Phi(|f|))
$$
for all  $f \in \mathcal L_0(\nabla_1)$.

By the definition of the  $\ast$-algebra $\mathcal L_{\log}(\nabla_1, \mu)$,
$$
f \in \mathcal L_{\log}(\nabla_1, \mu) \Leftrightarrow  (f \in \mathcal L_0(\nabla_1) \ \ \text{and} \ \  \log(1+|f|) \in \mathcal L_{1}(\nabla_1, \mu)).
$$
Consequently, in view of
$$\Phi(\mathcal L_{1}(\nabla_1, \mu))=\mathcal L_{1}(\nabla_2, \lambda)
 $$
and
$$\Phi(log(1+|f|)) = log(1+\Phi(|f|)), \ \ f \in \mathcal L_0(\nabla_1),
$$
we conclude that
$$
log(1+|\Phi(f)| = log(1+\Phi(|f|) \in \mathcal L_{\log}(\nabla_2, \lambda).
$$
Therefore $\Phi(f) \in \mathcal L_{\log}(\nabla_2, \lambda)$ for all $f \in \mathcal L_{\log}(\nabla_1, \mu)$. Similarly, using the inverse $\ast$-isomorphism $\Phi^{-1}$, we see that $\Phi^{-1}(h) \in \mathcal L_{\log}(\nabla_1, \mu)$ for all $h \in \mathcal L_{\log}(\nabla_2, \lambda)$. Therefore $\Phi(\mathcal L_{\log}(\nabla_1, \mu)) = \mathcal L_{\log}(\nabla_2, \lambda)$.
\end{proof}

Let $\mu$ ($\nu$)  be a strong  positive countably additive $\sigma$-finite measure on a complete Boolean algebra  
$\nabla_1$ (respectively, $\nabla_2$). The measures $\mu$ and $\nu$  are called log-equivalent if there exists an isomorphism  $\varphi: \nabla_1 \rightarrow \nabla_2$ such that $\mathcal L_{\log}(\nabla_2, \nu)=\mathcal L_{\log}(\nabla_2, \mu \circ \varphi),$ where $(\mu \circ \varphi)(e) = \mu(\varphi(e)), \ e \in \nabla_2$.

As noted above,  $\lambda =\mu \circ \varphi$ is a strictly positive and countably additive $\sigma$-finite measure on the Boolean algebra $\nabla_2$. Therefore, by virtue of Corollary \ref{c1}, condition
$\mathcal L_{\log}(\nabla, \nu)=\mathcal L_{\log}(\nabla, \mu \circ \varphi)$ is equivalent to the system
$\frac{d\nu}{d\lambda} \in \mathcal L_{\infty}(\nabla_2)$ and $\frac{d\lambda}{d\nu} \in \mathcal L_{\infty}(\nabla_2)$.

\begin{teo}\label{t1}
Let $\mu$ ($\nu$)  be a strong  positive countably additive  $\sigma$-finite measure  on a complete Boolean algebra  
$\nabla_1$ (respectively, $\nabla_2$). Then the algebras  $\mathcal L_{\log}(\nabla_1, \mu)$ and $\mathcal L_{\log}(\nabla_2, \nu)$ are $\ast$-isomorphic if and only if the measures $\mu$ and $\nu$ are log-equivalent.
\end{teo}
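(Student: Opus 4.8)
The plan is to prove both implications, treating the sufficiency as an almost immediate consequence of Proposition \ref{p2} and concentrating the work on the necessity.

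For the \emph{if} direction, assume $\mu$ and $\nu$ are log-equivalent, so that there is a Boolean isomorphism $\varphi\colon\nabla_1\to\nabla_2$ with $\mathcal L_{\log}(\nabla_2,\nu)=\mathcal L_{\log}(\nabla_2,\mu\circ\varphi)$. I would first extend $\varphi$ to a $\ast$-isomorphism $\Phi\colon\mathcal L_0(\nabla_1)\to\mathcal L_0(\nabla_2)$ with $\Phi(e)=\varphi(e)$ for $e\in\nabla_1$; this is the standard lifting of a measure-algebra isomorphism to the algebra of all measurable functions. Proposition \ref{p2} then gives $\Phi(\mathcal L_{\log}(\nabla_1,\mu))=\mathcal L_{\log}(\nabla_2,\mu\circ\varphi)=\mathcal L_{\log}(\nabla_2,\nu)$, so the restriction of $\Phi$ is the required $\ast$-isomorphism.

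For the \emph{only if} direction, let $T\colon\mathcal L_{\log}(\nabla_1,\mu)\to\mathcal L_{\log}(\nabla_2,\nu)$ be a $\ast$-isomorphism. The first step is to recover a Boolean isomorphism from $T$. A self-adjoint idempotent $p=p^\ast=p^2$ of $\mathcal L_{\log}(\nabla,\rho)$ is exactly a characteristic function $e\in\nabla$ with $\rho(e)<\infty$, since $\|e\|_{\log}=\rho(e)\log 2$; thus $T$ carries the finite projections of $\nabla_1$ bijectively onto those of $\nabla_2$. As $T$ is multiplicative and $\ast$-preserving, the resulting bijection $\varphi_0$ respects the lattice structure on finite elements: $p\wedge q=pq$, the order $p\le q\Leftrightarrow pq=p$, and the relative complements $q-p$ for $p\le q$ are all preserved. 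Using $\sigma$-finiteness I would then extend $\varphi_0$ to all of $\nabla_1$ by setting $\varphi(\sup_n e_n)=\sup_n\varphi_0(e_n)$ for increasing sequences $e_n\uparrow e$ of finite elements; order preservation makes this independent of the chosen sequence and yields a Boolean isomorphism $\varphi\colon\nabla_1\to\nabla_2$, which in turn lifts to a $\ast$-isomorphism $\Phi\colon\mathcal L_0(\nabla_1)\to\mathcal L_0(\nabla_2)$ as in the first direction.

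The crux is to show that $T$ coincides with $\Phi$ on $\mathcal L_{\log}(\nabla_1,\mu)$. By construction $T$ and $\Phi$ agree on every finite projection, hence on the $\ast$-subalgebra of finite simple functions, and this subalgebra is dense: for $0\le f\in\mathcal L_{\log}(\nabla_1,\mu)$ one has finite simple $s_n\uparrow f$ with $\|f-s_n\|_{\log}=\int\log(1+(f-s_n))\,d\mu\to 0$ by dominated convergence. The difficulty is that an abstract $\ast$-isomorphism of these non-locally-convex $F$-algebras need not be continuous for the $F$-norm topologies, so one cannot merely pass to the limit. The hard part—handled by the order-theoretic method of \cite{a},\cite{ach}—is to verify that $T$ respects monotone limits of finite simple functions (equivalently, that $T$ acts through $\varphi$ on the spectral projections of each element), forcing $T=\Phi$ on $\mathcal L_{\log}(\nabla_1,\mu)$. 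Once this is in hand, Proposition \ref{p2} yields
$$\mathcal L_{\log}(\nabla_2,\nu)=T(\mathcal L_{\log}(\nabla_1,\mu))=\Phi(\mathcal L_{\log}(\nabla_1,\mu))=\mathcal L_{\log}(\nabla_2,\mu\circ\varphi),$$
which is exactly the log-equivalence of $\mu$ and $\nu$.
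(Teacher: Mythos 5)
Your proposal follows essentially the same route as the paper's proof. The sufficiency argument is identical (lift $\varphi$ to a $\ast$-isomorphism $\Phi$ of the $\mathcal L_0$-algebras and apply Proposition \ref{p2}), and your necessity argument has the same skeleton as the paper's: the idempotents of $\mathcal L_{\log}(\nabla,\rho)$ are precisely the elements of $\nabla$ of finite measure (your formula $\|e\|_{\log}=\rho(e)\log 2$), the given $\ast$-isomorphism therefore induces a Boolean isomorphism between these, and $\sigma$-finiteness lets one assemble a global isomorphism $\varphi\colon\nabla_1\to\nabla_2$; the paper does this gluing with a partition of unity $\{e_n\}$ and $\varphi(e)=\sup_{n}\varphi_n(e\cdot e_n)$ rather than with increasing sequences, an immaterial difference. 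The one point of divergence is the last step. You explicitly flag that one still has to show that $T$ is implemented by $\varphi$ (i.e.\ $T=\Phi$, or at least $T(\mathcal L_{\log}(\nabla_1,\mu))=\Phi(\mathcal L_{\log}(\nabla_1,\mu))$) and you defer this to the method of \cite{a}, \cite{ach} without proving it; the paper dispatches exactly the same point in one line, asserting that $\Psi(e)=\varphi(e)$ for all $e\in\nabla_1$ together with Proposition \ref{p2} yields $\Psi(\mathcal L_{\log}(\nabla_1,\mu))=\mathcal L_{\log}(\nabla_2,\mu\circ\varphi)$, even though Proposition \ref{p2} as stated concerns $\ast$-isomorphisms of $\mathcal L_0(\nabla_1)$, not of $\mathcal L_{\log}(\nabla_1,\mu)$. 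So your write-up is not less complete than the paper's, and it is more candid about where the real content lies.

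For completeness, here is how the deferred step can be closed without citing \cite{a}, \cite{ach}. Put $g=T\circ\Phi^{-1}$, a $\ast$-isomorphism from $\mathcal L_{\log}(\nabla_2,\mu\circ\varphi)$ onto $\mathcal L_{\log}(\nabla_2,\nu)$ which fixes every $q\in\nabla_2$ of finite $(\mu\circ\varphi)$-measure; for such $q$ one gets $\nu(q)=\nu(g(q))<\infty$ automatically. Fix such a $q$ and work inside $q\nabla_2$. On a finite-measure piece every positive element $f$ of the log-algebra is the square of an element of the same algebra, since $\log(1+\sqrt{f})\le\log 2+\log(1+f)$ is integrable there; hence $g$ (and likewise $g^{-1}$) carries the pointwise order to the pointwise order. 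A bounded $0\le f\le Mq$ can be sandwiched between simple functions, which $g$ fixes, giving $g(f)=f$; for unbounded $f\ge 0$ the truncations satisfy $f\wedge nq=g(f\wedge nq)\le g(f)$, so $f\le g(f)$, and the symmetric argument applied to $g^{-1}$ gives $g(f)\le f$, whence $g(f)=f$ on each piece. Finally the locality relation $g(qf)=q\,g(f)$ glues these identities along a $\sigma$-finite exhaustion, so $g=\mathrm{id}$ and $\mathcal L_{\log}(\nabla_2,\nu)=\mathcal L_{\log}(\nabla_2,\mu\circ\varphi)$, which is the desired log-equivalence. One caveat that justifies your caution: the finite-measure reduction is genuinely needed, because over an infinite measure a positive element of $\mathcal L_{\log}$ need not be a finite sum of squares (e.g.\ $1/x^2$ on $[1,\infty)$ with Lebesgue measure), so order preservation by an abstract $\ast$-isomorphism is not automatic globally.
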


\begin{proof}
Let  measures $\mu$ and $\nu$ be log-equivalent, that is, there exists an isomorphism $\varphi: \nabla_1 \rightarrow \nabla_2$  such that $\mathcal L_{\log}(\nabla_2, \nu)=\mathcal L_{\log}(\nabla_2, \mu \circ \varphi)$. Let $\Phi:\mathcal  L_0(\nabla_1) \to \mathcal  L_0(\nabla_2)$ be  a $\ast$-isomorphism for which $\varphi(e) = \Phi(e)$ for all $e \in \nabla$. By Proposition \ref{p2}, we have
$$\Phi(\mathcal L_{\log}(\nabla_1, \mu))=(\mathcal L_{\log}(\nabla_2, \mu \circ \varphi)) 
= \mathcal L_{\log}(\nabla_2, \nu).
$$
This means that the algebras  $\mathcal L_{\log}(\nabla_1, \mu)$ and $\mathcal L_{\log}(\nabla_2, \nu)$ are $\ast$-isomorphic.

Conversely, suppose that the algebras $\mathcal L_{\log}(\nabla_1, \mu)$ and $\mathcal L_{\log}(\nabla_2, \nu)$ are 
$\ast$-isomorphic, that is, there exists a $\ast$-isomorphism 
$$
\Psi: \mathcal L_{\log}(\nabla_1, \mu) \to \mathcal L_{\log}(\nabla_2, \nu).
$$
Let $\{e_n\}_{n=1}^\infty$ be a partition of a unity of the Boolean algebra $\nabla_1$ such that $$\mu(e_n) < \infty 
\text{ \ and \ } \nu(\Psi(e_n)) < \infty \ \ \text{for all} \ \ n.
$$
It is clear that  $\Psi: \mathcal L_{\log}(e_n \nabla_1, \mu) \to  \mathcal L_{\log}(\Psi(e_n) \nabla_2, \nu)$ is a $\ast$-isomorphism. Since $e_n \nabla_1 \subset  \mathcal L_{\log}(e_n \nabla_1, \mu)$ and  
$\Psi(e_n) \nabla_2 \subset  \mathcal L_{\log}(\Psi(e_n) \nabla_2, \nu)$, it follows that the restriction 
$\varphi_n$ of that $\ast$-isomorphism $\Psi:  \mathcal L_{\log}(e_n \nabla_1, \mu) \to  \mathcal L_{\log}(\Psi(e_n) \nabla_2, \nu)$ on the Boolean algebra $e_n \nabla_1$ is an isomorphism from  $e_n \nabla_1$ onto $\Psi(e_n) \nabla_2$. Define the map $\varphi:  \nabla_1 \to  \nabla_2$ by the formula
$$
\varphi(e) = \sup\limits_{n\geq 1} \varphi_n(e\cdot e_n), \ e \in \nabla_1.
$$
It is clear that $\varphi$ is an isomorphism from  $\nabla_1$ onto $\nabla_2$ and $\Psi(e) = \varphi(e)$ for all $e \in \nabla_1$. This means that (see Proposition \ref{p2})
$$\mathcal L_{\log}(\nabla_2, \mu \circ \varphi) = \Psi(\mathcal L_{\log}(\nabla_1, \mu)) = \mathcal L_{\log}(\nabla_2, \nu),$$
that is, the measures $\mu$ and $\nu$ are log-equivalent.
\end{proof}

Let $ \nabla$ be an arbitrary complete Boolean algebra, $\nabla_e=\{g \in \nabla:g \leq e\}$, where $ 0\neq e \in \nabla$. 
Denote by $\tau(\nabla_e)$ the minimum cardinality of a set that is dense in $ \nabla_e$ with respect to the order topology ($(o)$-topology). An infinite Boolean algebra $ \nabla$ is said to be homogeneous if $\tau(\nabla_e)=\tau(\nabla_g)$ for any nonzero $ e, g \in \nabla$. The cardinality of $\tau(\nabla)$ is called the weight of the homogeneous Boolean algebra $\nabla$ (see, for example, \cite[chapter VII]{v}).

Let $\mathbf 1_{\nabla} $ be the unity in a Boolean algebra $\nabla$.
It is known  that any infinite Boolean algebra $(\nabla, \mu)$ with $\mu(\mathbf 1_{\nabla})<\infty$ is a direct product of homogeneous Boolean algebras $ \nabla_{e_n}$, $e_n\cdot e_m = 0, \ n\neq m$, 
$\tau_n=\tau (\nabla_{e_n})< \tau_{n+1}$ (\cite[Chapter VII, \S 2, Theorem 3]{v}). Set $\mu_n = \mu(e_n)$. The matrix
$\left(
                  \begin{array}{ccc}
                    \tau_{1} & \tau_{2} & \dots \\
                    \mu_1 & \mu_2 & \dots \\
                  \end{array}
                \right)$
is called  the  passport of the Boolean algebra $(\nabla, \mu)$.

The following theorem gives a classification of Boolean algebras with finite measure \cite[Chapter VII, \S 2, Theorem 5]{v}).
\begin{teo}\label{t3} Let $\mu$ ($\nu$)  be a probability measure  on  infinite complete Boolean algebra  $\nabla_1$  (respectively, $\nabla_2$). Let $\left(
\begin{array}{ccc}
                   \tau_{1}^{(1)} & \tau_{2}^{(1)} & \dots  \\
                    \mu_1 & \mu_2 & \dots \\
                  \end{array}
                \right)$ be the passport of the Boolean algebra  $(\nabla_1, \mu)$, and let $\left(
                  \begin{array}{ccc}
                   \tau_{1}^{(2)} & \tau_{2}^{(2)} & \dots  \\
                    \nu_1 & \nu_2 & \dots \\
                  \end{array}
                \right)$ be the passport of the Boolean algebra  $(\nabla_2, \nu)$.
The following conditions are equivalent:
\begin{enumerate}[(i)]
\item There exists an isomorphism $\varphi:  \nabla_1 \to  \nabla_2$ such that $\mu(e) = \nu(\varphi (e))$ for all   
$e \in \nabla_1$;
\item $\tau_{n}^{(1)} =\tau_{n}^{(2)}$ and  $\mu_n =\nu_n$  for all $n$.
\end{enumerate}
\noindent
In addition, the Boolean algebras  $\nabla_1$  and $\nabla_2$ are isomorphic if and only if the upper rows of their passports 
coincide.
\end{teo}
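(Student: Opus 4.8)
The plan is to reduce everything to the homogeneous case, where the classification is governed by Maharam's theorem, and to exploit the canonical decomposition $\nabla = \prod_n \nabla_{e_n}$ of \cite[Chapter VII, \S 2, Theorem 3]{v}. I would first record the key structural fact that this decomposition is \emph{canonical}: for each cardinal $\tau$ the band carrying the homogeneous part of weight $\tau$ is the largest $e$ with $e\nabla$ homogeneous of weight $\tau$, and every nonzero homogeneous band of weight $\tau$ sits below it; in particular the upper row of the passport is exactly the set of cardinals $\tau$ admitting a nonzero band of weight $\tau$. The second ingredient is that the weight $\tau(\cdot)$ is an invariant of the underlying Boolean structure: the $(o)$-topology is defined in purely order-theoretic terms, so any Boolean isomorphism $\psi:\nabla_1\to\nabla_2$ is an $(o)$-homeomorphism and sends a dense subset of $e\nabla_1$ onto a dense subset of $\psi(e)\nabla_2$, whence $\tau(e\nabla_1)=\tau(\psi(e)\nabla_2)$ for all $e$. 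Thus $\psi$ carries the canonical band of weight $\tau$ in $\nabla_1$ onto the canonical band of weight $\tau$ in $\nabla_2$.

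For $(i)\Rightarrow(ii)$, let $\varphi:\nabla_1\to\nabla_2$ be a measure-preserving isomorphism and let $\{e_n\}$, $\{g_n\}$ be the canonical partitions with $\tau(e_n\nabla_1)=\tau_n^{(1)}$ and $\tau(g_n\nabla_2)=\tau_n^{(2)}$. By the previous paragraph $\varphi$ maps each $e_n$ onto the canonical band of weight $\tau_n^{(1)}$ in $\nabla_2$, so $\tau_n^{(1)}$ appears in the passport of $\nabla_2$; running the same argument for $\varphi^{-1}$ shows the two weight-sets coincide, and since both upper rows are listed strictly increasingly we obtain $\tau_n^{(1)}=\tau_n^{(2)}$ and $\varphi(e_n)=g_n$ for all $n$. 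Measure-preservation then gives $\mu_n=\mu(e_n)=\nu(\varphi(e_n))=\nu(g_n)=\nu_n$.

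For $(ii)\Rightarrow(i)$, assume the passports agree. For each $n$ the bands $e_n\nabla_1$ and $g_n\nabla_2$ are homogeneous of equal weight $\tau_n^{(1)}=\tau_n^{(2)}$ and carry measures of equal total mass $\mu_n=\nu_n$, so by Maharam's classification of homogeneous measure algebras there is a measure-preserving isomorphism $\varphi_n:e_n\nabla_1\to g_n\nabla_2$. Gluing them exactly as in the proof of Theorem \ref{t1}, by setting $\varphi(e)=\sup_{n\ge 1}\varphi_n(e\cdot e_n)$, produces an isomorphism $\varphi:\nabla_1\to\nabla_2$ which is measure-preserving since each $\varphi_n$ is and the $e_n$ form a partition of unity.

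For the final assertion a Boolean isomorphism need not respect the measures, so only the weights survive. Applying the first paragraph with $\psi$ a bare Boolean isomorphism shows that $\nabla_1\cong\nabla_2$ forces the weight-sets, hence the upper rows, to coincide; conversely, if the upper rows agree then for each $n$ the bands $e_n\nabla_1$ and $g_n\nabla_2$ are homogeneous of the same weight and therefore Boolean isomorphic (the Boolean type of a homogeneous measure algebra depends only on its weight, the mass being irrelevant), and gluing as above yields $\nabla_1\cong\nabla_2$. The main obstacle throughout is the homogeneous classification invoked in $(ii)\Rightarrow(i)$ — that two homogeneous measure algebras of equal weight and equal total mass are measure-preservingly isomorphic — which is precisely the hard core of Maharam's theorem; everything else is bookkeeping with the canonical decomposition and the gluing device already used for Theorem \ref{t1}.
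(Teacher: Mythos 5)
This statement is one the paper does not prove at all: Theorem \ref{t3} is quoted verbatim from Vladimirov's book (\cite[Chapter VII, \S 2, Theorem 5]{v}) and used as a black box, so there is no internal proof to compare against. Your proposal is, in substance, a correct reconstruction of the standard argument behind that citation, and its bookkeeping is sound: the canonicity of the homogeneous decomposition (any homogeneous band of weight $\tau$ meets only one $e_n$, else it would contain sub-bands of distinct weights, so it lies under the unique $e_n$ with $\tau_n=\tau$), the fact that weight is an order-theoretic invariant preserved by any Boolean isomorphism, and the gluing $\varphi(e)=\sup_n\varphi_n(e\cdot e_n)$ with countable additivity giving measure preservation, are all correct and suffice to reduce the theorem to the homogeneous case. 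Two caveats are worth flagging. First, your ``hard core'' is Maharam's theorem, so your proof is really a reduction of one deep cited result to another equally deep cited result; that is perfectly legitimate here (the paper itself does no more than cite), but it is not a self-contained proof. Second, you silently identify Vladimirov's weight $\tau(\nabla_e)$ (minimal cardinality of an $(o)$-dense subset) with the Maharam type (minimal cardinality of a completely generating subset); these do coincide for homogeneous normed Boolean algebras, but that identification is itself a lemma in \cite{v} and should be stated as an ingredient rather than elided.
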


Now we are ready to give a criterion for a $\ast$-isomorphism between $\ast$-algebras $\mathcal L_{\log}(\nabla, \mu)$ of log-integrable measurable functions associated with finite measure spaces. Let $\mu$ and $\nu$  be finite measures on complete Boolean algebras  $\nabla_1$  and $\nabla_2$, respectively. Since  $\mathcal L_{\log}(\nabla, \mu) =  \mathcal L_{\log}(\nabla, \frac{\mu}{\mu(\mathbf 1_{\nabla})})$, we can assume without loss of generality that
$\mu(\mathbf 1_{\nabla_1}) = 1 =\nu(\mathbf 1_{\nabla_2})$.

\begin{teo}\label{t4} Let $\mu$ ($\nu$)  be a probability measure  on  an infinite complete Boolean algebra  $\nabla_1$  (respectively, $\nabla_2$), and let $\left(
\begin{array}{ccc}
                   \tau_{1}^{(1)} & \tau_{2}^{(1)} & \dots  \\
                    \mu_1 & \mu_2 & \dots \\
                  \end{array}
                \right)$ ,
                 $\left(
                  \begin{array}{ccc}
                   \tau_{1}^{(2)} & \tau_{2}^{(2)} & \dots  \\
                    \nu_1 & \nu_2 & \dots \\
                  \end{array}
                \right)$ be the passports of $(\nabla_1, \mu)$ and $(\nabla_2, \nu)$.

The following conditions are equivalent:
\begin{enumerate}[(i)]
\item The   $\ast$-algebras $\mathcal L_{\log}(\nabla_1, \mu)$ and $\mathcal L_{\log}(\nabla_2, \nu)$ are $\ast$-isomorphic;
\item The upper rows of the passports of $(\nabla_1, \mu)$ and $(\nabla_2, \nu)$ coincide and the sequences 
$\{\frac{\mu_n}{\nu_n}\}$ and $\{\frac{\nu_n}{\mu_n}\}$ are bounded.
\end{enumerate}
\end{teo}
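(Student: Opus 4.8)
The plan is to run everything through Theorem~\ref{t1}, which reduces the $\ast$-isomorphism of the two algebras to the \emph{log-equivalence} of $\mu$ and $\nu$, i.e.\ to the existence of a Boolean isomorphism $\varphi\colon\nabla_1\to\nabla_2$ for which the pushforward measure $\lambda=\mu\circ\varphi^{-1}$ (so $\lambda(\varphi(e))=\mu(e)$) satisfies $\mathcal L_{\log}(\nabla_2,\nu)=\mathcal L_{\log}(\nabla_2,\lambda)$. By Corollary~\ref{c1} the latter equality is equivalent to $\frac{d\lambda}{d\nu}\in\mathcal L_\infty(\nabla_2)$ and $\frac{d\nu}{d\lambda}\in\mathcal L_\infty(\nabla_2)$, i.e.\ to the existence of a constant $C>0$ with $C^{-1}\nu(h)\le\lambda(h)\le C\,\nu(h)$ for all $h\in\nabla_2$. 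Thus the whole theorem becomes the task of matching this two-sided bound against the passport data. Throughout I will use that homogeneity and the weights $\tau(\nabla_e)$ are order-theoretic, hence purely Boolean, invariants, so that the maximal homogeneous components $e_n\in\nabla_1$ and $f_n\in\nabla_2$ (of weights $\tau_n^{(1)}$ and $\tau_n^{(2)}$) are determined by the algebras alone and are carried to one another by any Boolean isomorphism.

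For $(ii)\Rightarrow(i)$ I would first use the coincidence $\tau_n^{(1)}=\tau_n^{(2)}=:\tau_n$ of the upper rows to build $\varphi$ componentwise. On the $n$-th homogeneous pieces $\nabla_{1,e_n}$ and $\nabla_{2,f_n}$, both of weight $\tau_n$, the normalized measures $\mu/\mu_n$ and $\nu/\nu_n$ are probability measures on homogeneous algebras of equal weight, so by Theorem~\ref{t3} there is a measure-preserving isomorphism $\varphi_n\colon\nabla_{1,e_n}\to\nabla_{2,f_n}$, i.e.\ $\mu(g)/\mu_n=\nu(\varphi_n(g))/\nu_n$. Gluing, $\varphi(e)=\sup_{n}\varphi_n(e\cdot e_n)$ is an isomorphism $\nabla_1\to\nabla_2$, and a direct computation gives $\lambda(h)=\mu(\varphi_n^{-1}(h))=\frac{\mu_n}{\nu_n}\nu(h)$ for $h\le f_n$, whence $\frac{d\lambda}{d\nu}=\sum_n\frac{\mu_n}{\nu_n}\chi_{f_n}$ and $\frac{d\nu}{d\lambda}=\sum_n\frac{\nu_n}{\mu_n}\chi_{f_n}$. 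Boundedness of $\{\mu_n/\nu_n\}$ and $\{\nu_n/\mu_n\}$ is then exactly the statement that both derivatives lie in $\mathcal L_\infty(\nabla_2)$, so Corollary~\ref{c1} yields log-equivalence and Theorem~\ref{t1} the $\ast$-isomorphism.

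For $(i)\Rightarrow(ii)$ I would start from the isomorphism $\varphi\colon\nabla_1\to\nabla_2$ and the two-sided bound $C^{-1}\nu\le\lambda\le C\nu$ furnished by Theorem~\ref{t1} and Corollary~\ref{c1}. Since $\varphi$ is a Boolean isomorphism and the homogeneous components are Boolean invariants, $\varphi$ maps the weight-$\tau_n^{(1)}$ component $e_n$ of $\nabla_1$ onto the unique weight-$\tau_n^{(1)}$ component of $\nabla_2$; comparing with the decomposition of $\nabla_2$ shows that $\nabla_1$ and $\nabla_2$ have the same set of component weights, and since both passports list weights in strictly increasing order this forces $\tau_n^{(1)}=\tau_n^{(2)}$ for all $n$, the coincidence of the upper rows. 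Writing $f_n=\varphi(e_n)$ for this common $\tau_n$-component, we have $\lambda(f_n)=\mu(e_n)=\mu_n$ and $\nu(f_n)=\nu_n$, so evaluating the two-sided bound at $h=f_n$ gives $C^{-1}\nu_n\le\mu_n\le C\,\nu_n$, i.e.\ $C^{-1}\le\mu_n/\nu_n\le C$ for every $n$. Hence $\{\mu_n/\nu_n\}$ and $\{\nu_n/\mu_n\}$ are bounded, which is $(ii)$.

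I expect the main obstacle to be the structural input rather than the estimates: one must justify that the homogeneous decomposition and its weights are genuine Boolean-isomorphism invariants (so that the upper rows are forced to agree), and that on each homogeneous component of a given weight the normalized measures are determined up to measure-preserving isomorphism, which is the content of Theorem~\ref{t3}. Once these are in hand, the passage between the two-sided bound $C^{-1}\nu\le\lambda\le C\nu$ and the boundedness of the scalar ratios $\mu_n/\nu_n$ is immediate, since $\lambda$ is, by construction in one direction and by evaluation on components in the other, controlled precisely by the component masses $\mu_n$ and $\nu_n$.
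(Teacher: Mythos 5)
Your proposal is correct and follows essentially the same route as the paper: Theorem \ref{t3} supplies the passport/Boolean matching, and Corollary \ref{c1} together with Proposition \ref{p2} (equivalently, Theorem \ref{t1}) converts boundedness of $\{\mu_n/\nu_n\}$ and $\{\nu_n/\mu_n\}$ into equality of the log-algebras, using the same step-function Radon--Nikodym derivative $\sum_n (\mu_n/\nu_n)\,q_n$. The only cosmetic difference is that in $(ii)\Rightarrow(i)$ you apply Theorem \ref{t3} componentwise to the normalized homogeneous pieces and glue, whereas the paper defines the auxiliary measure $\gamma(q)=\sum_n \mu_n\nu_n^{-1}\nu(q_n q)$ on $\nabla_2$ and applies Theorem \ref{t3} once globally; the resulting pushforward measure is identical.
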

\begin{proof}
$(i)\ \Rightarrow\ (ii)$: Let $\Psi: \mathcal L_{\log}(\nabla_1, \mu) \to \mathcal L_{\log}(\nabla_2, \nu)$ be a
$\ast$-isomorphism. Then the restriction $\varphi$ of $\Psi$ on the Boolean algebra $\nabla_1$ is an isomorphism from  
$\nabla_1$ onto $\nabla_2$, hence, by Theorem \ref{t3}, the upper rows of passports of $(\nabla_1, \mu)$ and 
$(\nabla_2, \nu)$ coincide.

Let $\{e_n\}_{n=1}^\infty$ be a partition of a unity  $\mathbf 1_{\nabla_1} $ of the Boolean algebra $\nabla_1$ such that $e_n \nabla_1$ is a homogeneous Boolean algebra, $\tau_n^{(1)}=\tau (e_n\nabla_{1})< \tau_{n+1}^{(1)}, $ and 
$\mu_n = \mu(e_n)$ for each $n$. Set $q_n = \varphi(e_n)$. It is clear that $\{q_n\}_{n=1}^\infty$ 
is a partition of the unity  $\mathbf 1_{\nabla_2} $ of the Boolean algebra $\nabla_2$ such that $ q_n\nabla_2$ 
is a homogeneous Boolean algebra, $\tau_n^{(2)}=\tau ({q_n\nabla_2})< \tau_{n+1}^{(2)}, $ and 
$\nu_n = \nu(q_n)$ for each $n$.

By Proposition \ref{p2}, for a probability measure $\lambda(\varphi(e)) = \mu(e)$, $e \in \nabla_1$, on $\nabla_2$ we have 
$$
\mathcal L_{\log}(\nabla_2, \nu) = \Psi(\mathcal L_{\log}(\nabla_1, \mu)) = \mathcal L_{\log}(\nabla_2, \lambda).
$$
Using Corollary, \ref{c1} we see that
$$
\frac{d\nu}{d\lambda} \in \mathcal L_{\infty}(\nabla_2)  \  \ \text{and} \ \  \frac{d\lambda}{d\nu} \in \mathcal L_{\infty}(\nabla_2).
$$
Consequently,
$$
\nu_n = \nu(q_n)= \int \limits_{q_n}\frac{d\nu}{d\lambda}d\lambda \leq \|\frac{d\nu}{d\lambda}\|_\infty \lambda(q_n) = \|\frac{d\nu}{d\lambda}\|_\infty \mu(e_n)
$$
for all $n$. Therefore, the sequence $\{\frac{\nu_n}{\mu_n}\}$ is bounded. That the sequence 
$\{\frac{\mu_n}{\nu_n}\}$ is also bounded is shown similarly.

$(ii)\ \Rightarrow\ (i)$: Let the upper rows of the passports of $(\nabla_1, \mu)$ and $(\nabla_2, \nu)$ coincide and the sequences $\{\frac{\mu_n}{\nu_n}\}$ and $\{\frac{\nu_n}{\mu_n}\}$ are bounded. By Theorem \ref{t3}, there exists an isomorphism $\varphi: \nabla_1 \to \nabla_2$. Let $\{e_n\}_{n=1}^\infty$ and  $q_n = \varphi(e_n)$ be as in the proof of the implication $(i)\ \Rightarrow\ (ii)$.  Consider the probability measure $\gamma(q)=\sum \limits_{n=1}^{\infty}\mu_n \nu_n^{-1} \nu(q_n q)$, $q \in \nabla_2$, on  $\nabla_2$. Since the passports of the Boolean algebras  $(\nabla_1, \mu)$ and $(\nabla_2, \gamma)$ coincide, it follows by Theorem \ref{t3} that there exists an isomorphism  $\psi:  \nabla_1 \to  \nabla_2$ such that  $\mu(e) = \gamma (\psi (e))$ for all   $e \in \nabla_1$.

Let now $\Psi: \mathcal L_0(\nabla_1, \mu) \to \mathcal L_0(\nabla_2, \gamma)$ be a
$\ast$-isomorphism such that $\psi(e) = \Psi(e)$ for all $e \in \nabla_1$.
By Proposition \ref{p2}, 
$\Psi(\mathcal L_{\log}(\nabla_1, \mu)) = \mathcal L_{\log}(\nabla_2, \gamma).$ Since $\gamma(q)=\sum \limits_{n=1}^{\infty}\mu_n \nu_n^{-1} \nu(q_n q)$, $q \in \nabla_2$, it follows that
$$
\frac{d \gamma}{d \nu} = \sum_{n=1}^\infty \mu_n \nu_n^{-1} q_n \in \mathcal L_{\infty}(\nabla_2).
$$
On the other hand, if $q \in q_n \nabla_2$, then $\nu( q)=\nu(q_n q)=  \nu_n \mu_n^{-1} \gamma(q)$, that is, 
$$\nu(q)=\sum \limits_{n=1}^{\infty}\nu_n \mu_n^{-1}\gamma(q_n q), \ q \in \nabla_2.$$ Consequently,
$$
\frac{d\nu}{d \gamma} = \sum_{n=1}^\infty \nu_n \mu_n^{-1} q_n \in \mathcal L_{\infty}(\nabla_2).
$$
Therefore, by Corollary \ref{c1}, we have 
$$
\mathcal L_{\log}(\nabla_2, \nu)=\mathcal L_{\log}(\nabla_2, \gamma)=\Psi(\mathcal L_{\log}(\nabla_1, \mu)).
$$
\end{proof}

\end{document}